\author{E. Mac\'{\i}as-Virg\'os}
\address{Departamento de Xeometr\'{i}a e Topolox\'{\i}a, Universidade de Santiago de Compostela, 15782-SPAIN}
\email{quique.macias@usc.es}
\urladdr{http://webspersoais.usc.es/persoais/quique.macias/}
\author{M. J. Pereira-S\'aez}
\address{Departamento de Econom\'{\i}a Aplicada II, Universidade de A Coru\~na, 15071-SPAIN}
\email{maria.jose.pereira@udc.es}
   \title[An upper bound for the LS category of $Sp(n)$]{An upper bound for the Lusternik-Shnirelmann category of the symplectic group}
\newtheorem{teor}{Theorem}[section]    
\newtheorem{coro}[teor]{Corollary} 
\theoremstyle{definition}
\newtheorem{ejem}[teor]{Example} 
\newtheorem*{note}{Remark}             
\newcommand{\cat}{\mathop{\mathrm{cat}}\nolimits}
\newcommand{\I}{\mathbf{i}}
\newcommand{\KK}{\mathbb{K}}
\newcommand{\RR}{\mathbb{R}}
\newcommand{\CC}{\mathbb{C}}
\newcommand{\HH}{\mathbb{H}}
\newcommand{\MM}{\mathcal{M}_{n\times n}(\KK)}
\newcommand{\Tr}{\mathrm{Tr}}
\newcommand{\diag}{\mathrm{diag}}
\newcommand{\grad}{\mathop{\mathrm{grad}}}
\newcommand{\bcat}{\mathop{\mathrm{bcat}}\nolimits}
\begin{document} 

\begin{abstract}    
We prove that the LS category of the symplectic group $Sp(n)$ is bounded above by $\binom{n+1}{2}$. This is achieved by computing the number of critical levels of a height function.

\end{abstract}

\maketitle

\section{Introduction}
The Lusternik-Shnirelmann category of a space $G$ is defined as the least integer $k\geq 0$ such that $G$ admits a covering by $k+1$ open subsets that are contractible in $G$ ({\em categorical} open sets). A classical problem raised by T.~Ganea \cite{Gan1971} is to compute this invariant for Lie groups.
However only a small number of calculations have been carried out, mainly due to the difficulty of the homotopical techniques involved. This is particularly true for the symplectic group where the only known results are $\cat Sp(2)=3$, obtained by P.~Schweitzer in 1965 \cite{Sch1965} and $\cat Sp(3)=5$  proved by L.~Fern\'andez, A.~G.~Tato, J.~Strom and D.~Tanr\'e  in 2001 \cite{FerGomStrTan2004} and by N.~Iwase and M.~Mimura \cite{IwaMim2004}. The latter authors also proved that $\cat Sp(n)\geq n+2$ for $n\geq 3$.

In 2009, M.~Hunziker and M.~R.~Sepanski \cite{HunSep2009} showed that the LS category of a simple, simply connected, compact Lie group $G$ is bounded above by the sum of the relative categories of the conjugacy classes ${\cal O}_k$ of $\exp v_k$, $0\leq k\leq n=\dim G$, where $\{v_0,\dots,v_n\}$ are the vertices of the fundamental alcove for the action of the affine Weyl group on the Lie algebra of a maximal torus of $G$. 
For $G=SU(n)$ the computation recovers W. Singhof's  result $\cat SU(n)=n-1$ \cite{Sing1975}.    For the symplectic group $G=Sp(n)$ their formula is
\begin{equation}\label{HS}
\cat Sp(n)+1\leq \sum_{k=0}^n{\left(\cat_{Sp(n)}{\cal O}_k +1\right)}.
\end{equation}
Moreover Hunzinker and Sepanski conjectured that $\cat_{Sp(n)}{\cal O}_k\leq \min\{k,n-k\}$, which should imply that
$$\cat Sp(n)\leq\lfloor\frac{(n+2)^2}{4}\rfloor-1.$$
The submanifolds ${\cal O}_k$ are Grassmannians, but the computation of their relative category seems to rely in a subtle way on the difference between left and right eigenvalues of quaternionic matrices, a subject where very little is known (see for instance the authors' results in \cite{MacPer2009, MacPer2010} about left eigenvalues of symplectic matrices).

In this paper we shall prove that  formula (\ref{HS}) is a very particular case of our Theorem \ref{COMPS}, which compares the category of the ambient manifold $G$ with the category of the critical subset $\Sigma(h_X)$ of any height function $h_X$. As a consequence we obtain the upper bound
$$\cat Sp(n)\leq \binom{n+1}{2}.$$

\section{Height functions on Lie groups}
In \cite{GomMacPer2011} the authors studied arbitrary height functions on a Lie group $G$ of orthogonal type. Let us briefly recall some results.

Let $\KK$ be either $\RR$ (reals), $\CC$ (complex) or $\HH$ (quaternions). The orthogonal group (resp. unitary, symplectic)
$$G=O(n,\KK)=\{A\in \MM\colon AA^*=I\}$$
is embedded in the euclidean space $\MM$, where the usual euclidean product is given by  $\langle A,B\rangle=\Re\Tr(A^*B)$ (we denote $\Re\Tr$ the real part of the trace and $*$ the conjugate transpose). Consequently, the height function $h_X\colon G \to \RR$ with respect to the hyperplane perpendicular to the vector $X\in \MM$, with $\vert X \vert =1$, is given by the formula
$$h_X(A)=\Re\Tr(X^*A).$$
Depending on the singular values of $X$ this function will be Morse (non-degenerate critical points) or just Bott-Morse (non-degenerate critical {\em submanifolds}). In fact, we have the following structure theorem for the critical $\Sigma(h_X)$ set of $h_X$. 

For $p\geq 1$ let us denote $\Sigma(p)$ the disjoint union $G_0^p\sqcup \cdots \sqcup G_p^p$ of  the Grassmannians 
$$G_q^p=\frac{O(p)}{O(q)\times O(p-q)}.$$

\begin{teor}\label{ESTRUCT}
 Let $0< t_1<\dots<t_k$ be the singular values of $X$, with multiplicities $n_0, n_1,\dots, n_k$. Then
$$\Sigma(h_X)\cong O(n_o)\times \Sigma(n_1)\times \cdots \times \Sigma(n_k).$$ \end{teor}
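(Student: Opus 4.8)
The plan is to describe $\Sigma(h_X)$ explicitly: first determine the critical condition, then normalise $X$ by a singular value decomposition, and finally solve the resulting matrix equation with the spectral theorem. For the first step, differentiating $AA^*=I$ shows that $T_AG$ is the left translate by $A$ of the space of skew-Hermitian matrices; and for the product $\langle B,C\rangle=\Re\Tr(B^*C)$ one checks that Hermitian and skew-Hermitian matrices are mutually orthogonal (apply the symmetry $\langle H,S\rangle=\langle S,H\rangle$ to $\Tr(HS)$), so the orthogonal complement of the skew-Hermitian matrices is exactly the space of Hermitian ones. Since $\langle X,AS\rangle=\langle A^*X,S\rangle$, the point $A$ is critical for $h_X|_G$ precisely when $A^*X$ is Hermitian, that is, when $X^*A=A^*X$.

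Next, using a singular value decomposition $X=UDV^*$ with $U,V\in G$ and $D$ real, diagonal and non-negative, one has $h_X=h_D\circ\Phi$ for the diffeomorphism $\Phi(A)=U^*AV$ of $G$; hence $\Sigma(h_X)=U\,\Sigma(h_D)\,V^*$ is diffeomorphic to $\Sigma(h_D)$, and we may assume $X=D$, the diagonal matrix with $0$ of multiplicity $n_0$ and each $t_i$ of multiplicity $n_i$. The critical condition $DA=A^*D$, left-multiplied by $A$, is then equivalent to the symmetric equation $ADA=D$. If $v\in\ker D$ then $ADAv=Dv=0$ forces $DAv=0$, so $A$ preserves $\ker D$ and, being unitary, also $\mathrm{Im}\,D=(\ker D)^{\perp}$; thus $A=A_0\oplus B$ with $A_0\in O(n_0,\KK)$ arbitrary and $B\in O(m,\KK)$, $m=n-n_0$, satisfying $BD'B=D'$ for the positive-definite part $D'$ of $D$. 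This produces the factor $O(n_0)$.

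It remains to identify $\{B\in O(m,\KK):BD'B=D'\}$ with $\Sigma(n_1)\times\cdots\times\Sigma(n_k)$. The linear map $B\mapsto C:=D'B$ carries this set bijectively onto $\{C\in\mathrm{Herm}(m,\KK):C^2=(D')^2\}$: if $B$ is unitary with $BD'B=D'$ then $C$ is Hermitian and $C^2=D'(BD'B)=(D')^2$, while conversely $B=(D')^{-1}C$ is unitary and satisfies $BD'B=D'$ as soon as $C^2=(D')^2$. Any such $C$ commutes with $(D')^2$, hence with $D'$, so $C=C_1\oplus\cdots\oplus C_k$ with $C_i\in\mathrm{Herm}(n_i,\KK)$ and $C_i^2=t_i^2I$; equivalently $t_i^{-1}C_i$ is a Hermitian involution, determined by its $(+1)$-eigenspace, which ranges over $\bigsqcup_{q=0}^{n_i}G_q^{n_i}=\Sigma(n_i)$. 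Assembling the factors yields $\Sigma(h_X)\cong O(n_0)\times\Sigma(n_1)\times\cdots\times\Sigma(n_k)$.

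The computation is short; the delicate point is that every spectral-theoretic ingredient used --- diagonalisability with real spectrum, the eigenspace splitting, and the correspondence between Hermitian involutions and Grassmannians --- must hold uniformly for $\KK=\RR,\CC,\HH$, which is why one must apply the spectral theorem only to Hermitian matrices, where the left/right eigenvalue pathologies of quaternionic linear algebra do not intervene. One should also check that the bijections above are diffeomorphisms, but this is immediate since they are built from linear maps and from the smooth assignment sending a Hermitian involution to its $(+1)$-eigenspace.
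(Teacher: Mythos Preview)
Your argument is correct and follows the same overall strategy as the paper: reduce via the singular value decomposition to a diagonal $X=D$, then analyse the critical equation on each block. The paper's version is terser --- it records the gradient formula $(\grad h_X)_A=\tfrac12(X^*-AXA)$ (equivalent to your condition $X^*A=A^*X$), observes that SVD conjugates $h_X$ to $h_D$, and then simply asserts that the problem decouples into the blocks $X=t_iI$, citing Frankel for the identification $\Sigma(h_{t_iI})=\Sigma(n_i)$.

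What you do differently is make that decoupling explicit and self-contained: you prove directly that $ADA=D$ forces $A$ to preserve $\ker D$ (hence the $O(n_0)$ factor), and then, rather than invoking Frankel, you use the linear change of variable $C=D'B$ to convert the remaining condition into $C^2=(D')^2$ with $C$ Hermitian, from which commutation with $D'$ and the block splitting follow by the spectral theorem. This buys you a proof that works uniformly over $\RR,\CC,\HH$ without appealing to prior literature, and it makes transparent why Grassmannians appear (as parameter spaces of Hermitian involutions). The paper's route is shorter but leans on \cite{Fran1963,GomMacPer2011} for exactly the steps you carry out by hand.
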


\begin{proof} Let $X=UDV^*$ be the singular value decomposition (SVD) of $X$, with $$D=\diag(0,\stackrel{n_0)}{\dots},0,t_1,\stackrel{n_1)}{\dots},t_1,\dots,t_k,\stackrel{n_k)}{\dots}, t_k).$$  Then the gradient at the point $A\in G$ equals
$$(\grad h_X)_A=\frac{1}{2}(X^*-AXA)=V(\grad h_D)_{V^*AU}U^*$$ while the Hessian $ T_AG \to T_AG$ verifies
$$(Hh_X)_A(Y)=(-1/2)(AXY+YXA)=V(Hh_D)_{V^*AU}(V^*YU)U^*.$$
Hence it suffices to study the functions  $h_X$ with $X={t_iI}$, whose critical set  is $\Sigma(n_i)$. The function $h_I$ was first studied by T.~Frankel \cite{Fran1963}.
\end{proof}

\begin{ejem}\label{U2}
 {\rm Let $G=U(2)$ and $X=\diag(0,1)$, that is $n_0=1$, $n_1=1$. Then $\Sigma(h_X)\cong U(1)\times \Sigma(1)=S^1\sqcup S^1$, two disjoint circles. On the other hand, if $X=\diag(1,1)$ then $n_0=0$ and $n_1=2$ hence $\Sigma(h_X)=\Sigma(2)$, the union of two points and a sphere $S^3/S^1\cong S^2$. Finally, when $X=\diag(1,2)$, that is $n_0=0$, $n_1=1$, $n_2=1$, the critical set $\Sigma(1)\times \Sigma(1)$ are four points.
}\end{ejem}

 \section{LS category and critical points}
The {\em relative} LS category of a submanifold $A\subset G$ is the minimum number $k$ such that $A$ can be covered by $k+1$ categorical open sets of $G$. Clearly if $A,B\subset G$ are disjoint compact submanifolds  then
\begin{equation}\label{UNION}
\cat_G(A\cup B)\leq \cat_GA +\cat_GB +1.
\end{equation}
For $G$ a closed manifold it is well known that $\cat G+1$ is a lower bound for the number of critical points of any smooth function $f\colon G \to \RR$. When the critical set $\Sigma(f)$ is not finite we still have the following formula \cite{RudSch2003,Ree1972}:
\begin{teor}\label{RUDYAK} Let $c_1,\dots,c_p$ be the critical values of the function $f$. Then$$\cat G+1 \leq \sum_{q=1}^p{\left(\cat_G\left(\Sigma(f)\cap f^{-1}(c_q)\right)+1\right)}.$$
\end{teor}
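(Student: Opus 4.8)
The plan is to run the classical Lusternik--Shnirelmann sublevel-set induction. Since $G$ is closed, the negative gradient flow of $f$ for any Riemannian metric is complete; I may assume, as in the statement, that there are finitely many critical values $c_1<\dots<c_p$. Choose regular values
\[
a_0<c_1<a_1<c_2<\dots<c_p<a_p
\]
with $a_0<\min_G f$ and $a_p>\max_G f$, and set $G^a=f^{-1}((-\infty,a])$. Then $G^{a_0}=\emptyset$, $G^{a_p}=G$, and $c_q$ is the unique critical value of $f$ lying in $[a_{q-1},a_q]$.

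The heart of the argument is the inequality
\[
\cat_G(G^{a_q})\ \le\ \cat_G(G^{a_{q-1}})+\cat_G\!\left(\Sigma(f)\cap f^{-1}(c_q)\right)+1 .
\]
To prove it, write $r=\cat_G(\Sigma(f)\cap f^{-1}(c_q))$ and choose open subsets $V_0,\dots,V_r$ of $G$, each contractible in $G$, that cover the compact critical set $\Sigma(f)\cap f^{-1}(c_q)$; their union $V$ is then an open neighbourhood of that set with $\cat_G V\le r$. Now I would invoke the standard deformation lemma of critical point theory: because $c_q$ is the only critical value in $[a_{q-1},a_q]$, flowing downwards along $-\grad f$ deforms $G^{a_q}$ inside $G$ into $G^{a_{q-1}}\cup V$ — the only trajectories that fail to descend to level $a_{q-1}$ have their $\omega$-limit sets contained in the critical set, hence are eventually trapped in the neighbourhood $V$. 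Combining this deformation with the homotopy invariance of the relative category and with subadditivity (the evident extension of (\ref{UNION}) to arbitrary subsets of $G$) gives
\[
\cat_G(G^{a_q})\ \le\ \cat_G(G^{a_{q-1}}\cup V)\ \le\ \cat_G(G^{a_{q-1}})+\cat_G V+1\ \le\ \cat_G(G^{a_{q-1}})+r+1 .
\]

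It then remains to induct on $q$. Starting from $\cat_G(G^{a_0})=\cat_G(\emptyset)=-1$, the inequality above yields $\cat_G(G^{a_q})\le -1+\sum_{i=1}^q\big(\cat_G(\Sigma(f)\cap f^{-1}(c_i))+1\big)$, and taking $q=p$, since $G^{a_p}=G$, this rearranges to exactly $\cat G+1\le\sum_{q=1}^p\big(\cat_G(\Sigma(f)\cap f^{-1}(c_q))+1\big)$. I expect the only genuine obstacle to be the deformation step — flowing past a critical level while confining the ``stuck'' points to the prescribed neighbourhood $V$ — which is precisely where compactness of $G$ and of the critical set is used, and which is the content isolated in \cite{RudSch2003,Ree1972}; the choice of regular values, the standard ANR-based homotopy invariance and subadditivity of $\cat_G(-)$ for subsets of the manifold, and the inductive bookkeeping are all routine.
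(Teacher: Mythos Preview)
The paper does not supply its own proof of this theorem: it is stated as a cited result, with the proof deferred entirely to \cite{RudSch2003,Ree1972}. So there is no in-paper argument to compare against.

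Your sublevel-set induction is exactly the classical Lusternik--Schnirelmann argument that those references formalize, and it is correct in outline. One technical point worth tightening: the clean statement of the deformation lemma is not that $G^{a_q}$ deforms into $G^{a_{q-1}}\cup V$, but rather that for a smaller neighbourhood $V'$ with $\overline{V'}\subset V$ the set $G^{a_q}\setminus\overline{V'}$ deforms into $G^{a_{q-1}}$. One then covers $G^{a_q}$ by the two sets $G^{a_q}\setminus\overline{V'}$ and $V$ and applies subadditivity; this avoids the issue of controlling exactly where the ``stuck'' trajectories land. With that adjustment your inductive inequality and the final bookkeeping go through verbatim, and you have reproduced the content the paper merely quotes.
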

Taking into account that in a path-connected manifold a finite collection of points has category $0$ we have:
\begin{coro}\label{LEVELS}
 Assume $G$ is connected. When all the critical points are isolated, $\cat G+1$ is bounded above by the number $p$ of critical {\em levels} of $f$.
\end{coro}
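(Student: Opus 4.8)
The plan is to read this off directly from Theorem \ref{RUDYAK}. First I would note that, since $G$ is a closed manifold and every critical point of $f$ is isolated, the critical set $\Sigma(f)$ is finite; hence $f$ has only finitely many critical values $c_1,\dots,c_p$, so $p<\infty$ and the sum on the right-hand side of Theorem \ref{RUDYAK} consists of exactly $p$ terms.

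Next I would fix a critical value $c_q$ and observe that the level set $\Sigma(f)\cap f^{-1}(c_q)$ is a finite subset of $G$. In the connected — hence path-connected — manifold $G$, a finite set of points has relative category $0$: joining the points successively by arcs and taking a suitable contractible open neighbourhood of the resulting compact tree-like union yields a single categorical open set of $G$ containing all of them. Consequently $\cat_G\bigl(\Sigma(f)\cap f^{-1}(c_q)\bigr)+1=1$ for every $q$, and substituting into Theorem \ref{RUDYAK} gives
$$\cat G+1\;\leq\;\sum_{q=1}^{p}\bigl(0+1\bigr)\;=\;p,$$
which is the assertion.

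I do not expect any genuine obstacle here, since essentially all of the content is already packaged in Theorem \ref{RUDYAK}. The only two points that need a word of justification are the finiteness of the number of critical levels (which follows from compactness of $G$ together with the isolatedness hypothesis) and the vanishing of the relative category of a finite point set (which is exactly the elementary fact about path-connected manifolds already invoked in the paragraph preceding the statement); neither is hard.
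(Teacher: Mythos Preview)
Your argument is correct and follows exactly the paper's approach: the paper simply precedes the corollary with the remark that in a path-connected manifold a finite collection of points has relative category $0$, and reads the bound off Theorem~\ref{RUDYAK}. You have merely spelled out the two implicit steps (finiteness of $\Sigma(f)$ from compactness, and path-connectedness from connectedness) in slightly more detail.
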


We start by proving in an easy way the result of Hunzinker and Sepanski cited in the Introduction.

\begin{ejem} {\rm Let $G=Sp(n)$ be the symplectic (quaternionic) group and consider the height function $h_I$, which is invariant by the adjoint action.  The critical set is $\Sigma(n)=G_0^n\sqcup \cdots \sqcup G_n^n$, where the Grassmannian $G_q^n={\cal O}_q$ is the orbit of the matrix $\diag(-I_q,+I_{n-q})$. Then, accordingly to Theorem \ref{RUDYAK},
$$\cat Sp(n)+1 \leq \sum_{q=0}^n{\left(\cat_G G_q^n +1\right)}$$
which is exactly formula (\ref{HS}).
}\end{ejem}

In the general case of an arbitrary function $h_X$ on the Lie group $G$ the components of the critical set are the products
$$\Sigma[n_0,i_1,\dots,i_k]=O(n_0)\times G_{i_1}^{n_1}\times \cdots \times G_{i_k}^{n_k}, \quad  0\leq i_q \leq n_q.$$
By applying Theorem \ref{RUDYAK} and formula (\ref{UNION}) we obtain

\begin{teor} \label{COMPS}
$$\cat G +1 \leq \sum{\left(\cat_G \Sigma[n_0,i_1,\dots,i_k] +1\right)}.$$
\end{teor}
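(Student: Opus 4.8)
The plan is to combine the two tools already assembled in the excerpt: the structure theorem for the critical set (Theorem~\ref{ESTRUCT}), which identifies $\Sigma(h_X)$ with a disjoint union of products of Grassmannians indexed by tuples $[n_0,i_1,\dots,i_k]$, and the Rudyak--Schwarz estimate (Theorem~\ref{RUDYAK}) bounding $\cat G+1$ by a sum over critical levels. First I would apply Theorem~\ref{RUDYAK} to the height function $f=h_X$. This gives $\cat G+1\leq\sum_{q=1}^p\bigl(\cat_G(\Sigma(h_X)\cap h_X^{-1}(c_q))+1\bigr)$, where $c_1,\dots,c_p$ are the critical values.

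Next I would analyse each level set. By Theorem~\ref{ESTRUCT} (and the discussion preceding the statement), $\Sigma(h_X)$ is the disjoint union, over admissible tuples $[n_0,i_1,\dots,i_k]$ with $0\leq i_q\leq n_q$, of the submanifolds $\Sigma[n_0,i_1,\dots,i_k]=O(n_0)\times G_{i_1}^{n_1}\times\cdots\times G_{i_k}^{n_k}$. Each such component is connected, hence lies in a single level set $h_X^{-1}(c_q)$; conversely each level set is a finite disjoint union of such components. Applying the subadditivity formula (\ref{UNION}) repeatedly (induction on the number of components in a level) yields, for each $q$,
$$\cat_G\bigl(\Sigma(h_X)\cap h_X^{-1}(c_q)\bigr)+1\leq\sum_{[n_0,i_1,\dots,i_k]\subset h_X^{-1}(c_q)}\bigl(\cat_G\Sigma[n_0,i_1,\dots,i_k]+1\bigr).$$
Summing over $q=1,\dots,p$ regroups the right-hand side into a single sum over \emph{all} admissible tuples, since every component occurs in exactly one level. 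This produces exactly
$$\cat G+1\leq\sum\bigl(\cat_G\Sigma[n_0,i_1,\dots,i_k]+1\bigr),$$
which is the claimed inequality.

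The only genuinely non-formal point is the passage from a level set, which a priori is a \emph{compact} submanifold with finitely many components, to the iterated use of (\ref{UNION}): one must check that the components of $\Sigma(h_X)$ really are the $\Sigma[n_0,i_1,\dots,i_k]$ and that they are compact and pairwise disjoint, so that (\ref{UNION}) applies. Both facts are immediate from Theorem~\ref{ESTRUCT}: $O(n_0)$ is compact and the Grassmannians $G_q^p$ are compact, so finite products and disjoint unions of them are compact, and distinctness of the index tuples forces the components to be disjoint. I expect no real obstacle here; the proof is essentially bookkeeping, assembling Theorem~\ref{ESTRUCT}, Theorem~\ref{RUDYAK} and (\ref{UNION}). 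The subsequent numerical consequence $\cat Sp(n)\leq\binom{n+1}{2}$ would then come from choosing $X$ with all singular values distinct and each multiplicity equal to $1$, so that every $\Sigma[n_0,i_1,\dots,i_k]$ is a finite set of points of category $0$, and counting the critical levels — but that is the content of a later section, not of Theorem~\ref{COMPS} itself.
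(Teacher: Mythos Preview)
Your proof is correct and matches the paper's argument exactly: the paper simply says that Theorem~\ref{COMPS} follows ``by applying Theorem~\ref{RUDYAK} and formula~(\ref{UNION})'', and you have written out precisely the bookkeeping behind that sentence. One small caveat: your claim that each $\Sigma[n_0,i_1,\dots,i_k]$ is connected fails in the real case when $n_0\geq 1$ (since $O(n_0)$ then has two components), but this does not affect the argument --- $h_X$ is constant on each such piece regardless, as the paper computes explicitly just after the theorem.
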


\begin{ejem}
Let $G=U(2)$ and $h_X$ as in Example \ref{U2} with $X=I$. Then
$$\cat U(2)+1 \leq 2(\cat_G(*) +1)+(\cat_G G_1^2+1)=3$$
because $\cat_{U(2)}G_1^2=0$. Indeed, although $\cat S^2=1$ it happens that $G_1^2$, which is the orbit by the adjoint action of the matrix $\diag(1,-1)$,  is contained in the open set 
$$\Omega_G(\I)=\{A\in G \colon \det(A-\I I)\neq 0\}$$ 
which is contractible \cite{GomMacPer2011}.
(It is known that $\cat U(n)=n$).
\end{ejem}

\begin{ejem} {\rm
Let $G=Sp(n)$ and take $n_0=n-1$. Then
$$\cat Sp(n)  \leq 2 \cat_{Sp(n)}Sp(n-1)+1.$$
Note that $\Sigma(h_X)=Sp(n-1)\times \Sigma(1)$ and that the two copies of $Sp(n-1)$ lie in two different critical levels $h_X^{-1}(\pm t_1)$.
}\end{ejem}

In order to improve Theorem \ref{COMPS} we should study the critical levels of an arbitrary height function $h_X$. The relative category is invariant by diffeomorphisms, then, accordingly to Theorem \ref{ESTRUCT} we can suppose that $X=\diag(0,\dots,t_1,\dots,t_k)$. Each component $\Sigma[n_0,i_1,\dots,i_k]$ of the critical set is formed by the matrices 
$$A=\begin{pmatrix}
A_0&\cr
&A_1\cr
&&\dots\cr
&&&A_k\cr
\end{pmatrix}$$ where $A_q\in O(n_q)$, $0\leq q\leq k$, and
$A_q^2=I$ for $q\geq 1$. Moreover each $A_q$, $q\geq 1$, diagonalizes to the real matrix 
$$\begin{pmatrix}
-I_{i_q}&\cr
&+I_{n_q-i_q}\cr
\end{pmatrix}.$$
Then
$$h_X(A)=\Re\Tr(X^*A)=t_1(n_1-2i_1)+\cdots+t_k(n_k-2i_k).$$
In particular, when $h_X$ is a Morse function (that is $n_0=0$, $n_1=\dots =n_k=1$) we have
that the possible critical values are
$$t_1\varepsilon_1+\cdots+t_k\varepsilon_k.$$

\begin{coro}\label{BOUND}
$$\cat Sp(n)\leq \binom{n+1}{2}.$$
\end{coro}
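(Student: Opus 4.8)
The plan is to exhibit one explicit Morse height function on $Sp(n)$, count its critical levels, and apply Corollary \ref{LEVELS}. For the quaternionic group $G=Sp(n)$ take $X$ to be the real diagonal matrix $\diag(1,2,\dots,n)$, rescaled to norm $1$ if one wishes to respect the normalization (this only multiplies all critical values by a constant and changes nothing below). Its singular values are $t_i$ proportional to $i$, $1\le i\le n$, all positive and pairwise distinct, each of multiplicity $1$, so $n_0=0$ and $k=n$. By Theorem \ref{ESTRUCT} the critical set is $\Sigma(h_X)\cong O(0)\times\Sigma(1)\times\cdots\times\Sigma(1)$ ($n$ factors), and since $\Sigma(1)=G_0^1\sqcup G_1^1$ is a pair of points, $\Sigma(h_X)$ is a finite set (of $2^n$ points). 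Hence $h_X$ is a Morse function on the closed connected manifold $Sp(n)$, and Corollary \ref{LEVELS} applies.

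Next I would identify the critical levels. By the computation recalled immediately before the statement, the critical values of $h_X$ are exactly the numbers $\varepsilon_1 t_1+\cdots+\varepsilon_n t_n$ with each $\varepsilon_i\in\{+1,-1\}$. Writing $\varepsilon_i=1-2\delta_i$ with $\delta_i\in\{0,1\}$, such a value equals (up to the fixed scaling constant) $\binom{n+1}{2}-2s$, where $s=\sum_{i:\delta_i=1} i$ ranges over the collection of all subset sums of $\{1,2,\dots,n\}$ as $(\delta_1,\dots,\delta_n)$ ranges over $\{0,1\}^n$. Since $s\mapsto\binom{n+1}{2}-2s$ is injective, the number of critical levels of $h_X$ equals the number of distinct subset sums of $\{1,2,\dots,n\}$.

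The only remaining ingredient is the elementary fact that the subset sums of $\{1,2,\dots,n\}$ are precisely the integers $0,1,\dots,\binom{n+1}{2}$: they obviously lie in that range, and a short induction on $n$ shows every value is attained (for $\binom{n}{2}<m\le\binom{n+1}{2}$, adjoin $n$ to a subset of $\{1,\dots,n-1\}$ summing to $m-n$). Consequently $h_X$ has exactly $p=\binom{n+1}{2}+1$ critical levels, and Corollary \ref{LEVELS} yields $\cat Sp(n)+1\le p$, that is $\cat Sp(n)\le\binom{n+1}{2}$.

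I do not expect a genuine obstacle: that $h_X$ is Morse is immediate from Theorem \ref{ESTRUCT} (any diagonal $X$ with distinct nonzero singular values works), and the subset-sum count is routine. It is worth remarking that the arithmetic progression $t_i\propto i$ is optimal among height functions, since $n$ distinct positive reals always have at least $\binom{n+1}{2}+1$ distinct subset sums; thus no single height function can beat this bound, and sharpening it further (e.g.\ to recover $\cat Sp(3)=5$) would require covering several critical levels by one categorical open set, in the spirit of the $U(2)$ example above.
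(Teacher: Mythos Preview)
Your proof is correct and follows essentially the same route as the paper: both take $X=\diag(1,2,\dots,n)$, identify the critical values as the signed sums $\sum_i \varepsilon_i\, i$, count $\binom{n+1}{2}+1$ distinct levels, and invoke Corollary~\ref{LEVELS}. The only cosmetic difference is that you phrase the count as a subset-sum problem with an explicit induction, whereas the paper observes directly that the critical values form the arithmetic progression $-C,-C+2,\dots,C$ with $C=\binom{n+1}{2}$.
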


\begin{proof}
Let us consider the Morse function $h_X$ with $X=\diag(1,2,\dots,n)$. In each critical level there is a matrix of the form $A=\diag(\varepsilon_1, \dots,\varepsilon_n)$ with $\varepsilon_i=\pm 1$. Then we observe that
\begin{enumerate}
\item
The maximum value is $C=1+\cdots+n=\binom{n+1}{2}$.
\item
If $c$ is a critical value then $-c$ is a critical value too.
\item
Changing $+1$ by $-1$ we observe that the critical values
are either $$-C, -C+2,\dots,-2, 0, 2, \dots, C$$ or $$-C,-C+2,\dots,-3, -1,1,3, \dots, C.$$

In both cases the number of critical levels is
$\binom{n+1}{2}+1$. Then the result follows from Corollary \ref{LEVELS}.
\end{enumerate}
\end{proof}

 \begin{note} An explicit categorical covering  of $G=Sp(2)$ is formed by the four open sets $\Omega_G(\pm I)$, $\Omega(\pm P)$, where $P=\diag(-1,1)$ and $\Omega_G(A)=\{X\in G\colon \det(A+X)\neq 0\}$.
\end{note}

\begin{note}\label{IWASE} Theorem \ref{RUDYAK}  is proved in \cite{RudSch2003} for several generalizations of LS category, including the so-called {\em ball category} $\bcat$ (coverings by smoothly embedded balls \cite{Sin1979}). Taking into account that for the function given in Corollary \ref{BOUND}  the critical points of each critical level are contained in the ball $\Omega_G(\I)$ we actually obtain $\bcat(Sp(n)\leq \binom{n+1}{2}$. \end{note}

%
%
%
%


\begin{thebibliography}

\bibitem{FerGomStrTan2004}
Fern\'andez-Su\'arez, L.; G\'omez-Tato, A.; Strom, J.; Tanr\'e, D.
The Lusternik-Schnirelmann category of Sp(3).  
{\em Proc. Amer. Math. Soc.} {\bf 132 }(2004), no. 2, 587--595. 

\bibitem{Fran1963} Frankel, T.
Critical submanifolds of the classical groups and Stiefel manifolds.  
{\em Differ. and Combinat. Topology}, Sympos. Marston Morse, Princeton, 37--53 (1963). 

\bibitem{Gan1971}
Ganea, T.
Some problems on numerical homotopy invariants. {\em Symposium on Algebraic Topology (Battelle Seattle Res. Center, Seattle Wash., 1971)}, pp. 23Ð30. Lecture Notes in Math., Vol. 249, Springer, Berlin, 1971.

\bibitem{GomMacPer2011}
G\'omez-Tato, A.; Mac\'{\i}as-Virg\'os, E.; Pereira-S\'aez, M. J.
Trace map, Cayley transform and LS category of Lie groups. 
{\em Ann. Global Anal. Geom.} {\bf 39} (2011), no. 3, 325--335. 

\bibitem{HunSep2009} 
Hunziker, M.; Sepanski, M. R.
Distinguished orbits and the L-S category of simply connected compact Lie groups. {\em Topology Appl.} {\bf 156} (2009), no. 15, 2443--2451. 

\bibitem{IwaMim2004}
Iwase, N.; Mimura, M.
L-S categories of simply-connected compact simple Lie groups of low rank. {\em Categorical decomposition techniques in algebraic topology} (Isle of Skye, 2001), 199--212,
Progr. Math., 215, Birkh\"auser, Basel, 2004. 

\bibitem{MacPer2009} Mac\'{\i}as-Virg\'os, E.; Pereira-S\'aez, M.~J. 
Left eigenvalues of $2\times 2$ symplectic matrices. 
{\em Electron. J. Linear Algebra} {\bf 18} (2009), 274--280. 

\bibitem{MacPer2010}Mac\'{\i}as-Virg\'os, E.; Pereira-S\'aez, M.~J. 
Symplectic matrices with predetermined left eigenvalues.  
{\em Linear Algebra Appl.} {\bf 432} (2010), no. 1, 347--350. 

\bibitem{Ree1972}
Reeken, M. Stability of critical points under small perturbations. Part I. Topological theory,
{\em Manuscripta Math.} {\bf 7} (1972), 387--411.

\bibitem{RudSch2003}
Rudyak, Y.; Schlenk, F.
Lusternik-Schnirelmann theory for fixed points of maps. {\em Topol. Methods Nonlinear Anal.} {\bf 21} (2003), no. 1, 171--194. 

\bibitem{Sch1965}
Schweitzer, P.~A.
Secondary cohomology operations induced by the diagonal mapping.
{\em Topology} {\bf 3} (1965), 337--355. 

\bibitem{Sing1975} Singhof, W.
On the Lusternik-Schnirelmann category of Lie groups.
{\em Math. Z.} {\bf 145} (1975), no. 2, 111--116. 

\bibitem{Sin1979} 
Singhof, W.
Minimal coverings of manifolds with balls, {\em Manuscripta Math.} {\bf 29} (1979),
385--415.


\end{thebibliography}
\end{document}